\newtheorem{theorem}{Theorem}[section]
\newtheorem{corollary}[theorem]{Corollary}
\newtheorem{lemma}[theorem]{Lemma}
\newtheorem{definition}[theorem]{Definition}
\title{Topological symmetry groups of the Petersen graph}
\author{D.~Chambers}
\author{E. Flapan}
\author{D.~Heath}
\author{E.~Lawrence}
\author{C.~Thatcher}
\author{R.~Vanderpool}
\address{Teaching and Learning Center, University of Washington Tacoma, Tacoma, WA 98402, USA}
\address{Department of Mathematics, Pomona College, Claremont, CA 91711}
\address{Department of Mathematics, Pacific Lutheran University, Tacoma, WA 98447}
\address{Department of Mathematics and Statistics, University of San Francisco, San Francisco, CA 94117, USA}
\address{Department of Mathematics, University of Puget Sound, Tacoma, WA 98416}
\address{School of Interdisciplinary Arts and Sciences, University of Washington Tacoma,
Tacoma, WA 98402, USA}
\date{\today}
\keywords{topological  symmetry groups, spatial graphs, molecular symmetries, Petersen graph}
\subjclass{ 57M25, 57M15, 57M27, 92E10, 05C10}
\thanks{The second author was supported in part by NSF Grant DMS-1607744}
\begin{document}

\maketitle

\begin{abstract}We characterize all groups which can occur as the topological symmetry group or the orientation preserving topological symmetry group of some embedding of the Petersen graph in $S^3$.  \end{abstract}

\section{Introduction}\label{introduction}

The symmetries of a molecule can predict many of its chemical properties, but non-rigid molecules may have symmetries which are not immediately apparent.  The field of spatial graph theory developed largely in order to study the symmetries of such molecular structures \cite{Si}.   Since small molecules are normally rigid, their symmetries are described by their rotations and reflections in space, which together form the group of rigid symmetries known as the \emph{point group} of a molecule. However, due to their flexibility, large molecules may have symmetries that are not contained in their point group. In this case, we represent the symmetries of the molecule by the subgroup of the automorphism group of the molecular graph that can be induced by homeomorphisms of the graph in $3$-dimensional space. This group considers a molecule as a topological object in space, and hence is called the \emph{topological symmetry group} of the structure. 

The topological symmetry group can be used to represent the symmetries of any spatial graph whether or not it is a molecular graph.  While molecular graphs are normally considered in $\mathbb{R}^3$, an automorphism of a spatial graph is induced by a homeomorphism of $\mathbb{R}^3$ if and only if it is induced by a homeomorphism of $S^3$.  Thus the topological symmetry group of a spatial graph is the same whether the graph is considered in $S^3$ or in $\mathbb{R}^3$. However, symmetries are easier to visualize in $S^3$ than in $\mathbb{R}^3$ because of the existence of glide rotations.  Thus topological symmetry groups are normally defined for graphs embedded in $S^3$.

We begin with some definitions.

 \begin{definition}  Let $\gamma$ be an abstract graph.  The group of automorphisms of the vertices of $\gamma$ is denoted by $\mathrm{Aut}(\gamma)$.
 \end{definition}

\begin{definition}  Let $\Gamma$ be a graph embedded in $S^3$.  We define the {\bf topological symmetry group} $\mathrm{TSG}(\Gamma)$ as the subgroup of $\mathrm{Aut}(\Gamma)$ induced by homeomorphisms of $(S^3,\Gamma)$.  If we only consider orientation preserving homeomorphisms we obtain the {\bf orientation preserving topological symmetry group} $\mathrm{TSG}_+(\Gamma)$.
\end{definition}

\begin{definition}  Let $\gamma$ denote an abstract graph and let $G$ be a subgroup of $\mathrm{Aut}(\gamma)$.  If there is an embedding $\Gamma$ of $\gamma$ in $S^3$ such that $\mathrm{TSG}(\Gamma)=G$, then we say that the group $G$ is {\bf realizable} for $\gamma$ and that $\Gamma$ {\bf realizes} $G$.
If there is some embedding $\Gamma$ of $\gamma$ in $S^3$ such that $\mathrm{TSG}_+(\Gamma)= G$, then we say that the group $G$ is {\bf positively realizable} for $\gamma$ and that $\Gamma$ {\bf positively realizes} $G$.
\end{definition}

For an abstract graph $\gamma$, we make the following observation about the relationship between groups that are realizable for $\gamma$ and groups that are positively realizable for $\gamma$. Suppose that $G$ is positively realized by some embedding $\Gamma$ of $\gamma$.  By adding identical trefoil knots to every edge of $\Gamma$ we obtain an embedding $\Lambda$ which is not invariant under any orientation reversing homeomorphism of $S^3$, but such that every automorphism that was induced on $\Gamma$ by an orientation preserving homeomorphism of $S^3$ is also induced on $\Lambda$ by an orientation preserving homeomorphism of $S^3$.  Thus $\mathrm{TSG}(\Lambda)=\mathrm{TSG}_+(\Gamma)$.  It follows that every group which is positively realizable for $\gamma$ is also realizable for $\gamma$.

Topological symmetry groups have been studied for graphs embedded in $3$-manifolds as well as in $S^3$.  In particular, it was shown in \cite{TSG1} that no alternating group $A_n$ with $n>5$, can occur as the topological symmetry group of a graph embedded in $S^3$.  By contrast, it was shown in \cite{TSG3} that every finite group is the topological symmetry group of a graph embedded in a hyperbolic rational homology sphere; though for any given closed, connected, orientable, irreducible, $3$-manifold, there is some $n$ such that the alternating group $A_n$ is not the topological symmetry group of any graph embedded in that manifold.  Furthermore, it was shown in \cite{CK} that the orientation preserving topological symmetry group of a $3$-connected spatial graph in $S^3$ is isomorphic to its orientation preserving mapping class group if and only if the complement of the graph is atoroidal. 

Topological symmetry groups have been completely characterized for several particular families of spatial graphs in $S^3$.  For the complete graphs, all possible orientation preserving topological symmetry groups were characterized in a series of papers \cite{CF1,CF2, FMN, FMN2, FMNY, TSG2}.  For the family of M\"{o}bius ladders with $n$ rungs (which includes the bipartite graph $K_{3,3}$), all orientation preserving topological symmetry groups were characterized in \cite{FL}.  For the family of complete bipartite graphs $K_{n,n}$, it was shown in \cite{TSG1} that all finite subgroups of $\mathrm{SO}(4)$ can be positively realized by an embedding of some $K_{n,n}$ in $S^3$.  More recently, \cite{Me} classified all $n$ such that the polyhedral groups $A_4$, $S_4$, or $A_5$ can be positively realized for $K_{n,n}$; and \cite{Me2} classified all $n$ such that the groups $\mathbb{Z}_m$, $D_m$, $\mathbb{Z}_r \times \mathbb{Z}_s$ or $(\mathbb{Z}_r \times \mathbb{Z}_s) \ltimes \mathbb{Z}_2$ can be positively realized for $K_{n,n}$.

In this paper, we determine all groups which can occur as the topological symmetry group or the orientation preserving topological symmetry group of some embedding of the Petersen graph in $S^3$.  We focus on the Petersen graph because of its importance in providing counterexamples to conjectures in graph theory \cite{DHJS} and because of the role it has played in the study of intrinsically linked graphs (see for example, \cite{NT, rst, S1, S2}).  As an abstract graph, the Petersen graph (henceforth denoted by $P$) is the $3$-connected graph with $10$ vertices and $15$ edges illustrated in Figure~\ref{pairs}.

\begin{figure}[h!]
\includegraphics[width=6cm]{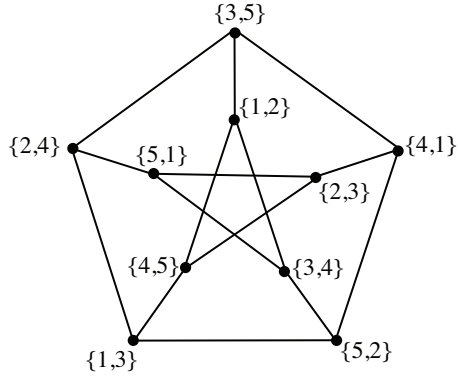}
\caption{The Petersen graph with vertices labeled by pairs of numbers such that there is an edge between two vertices if and only if their pairs of numbers are disjoint.}\label{pairs}
\end{figure}

In Figure~\ref{pairs}, we have labeled the vertices of $P$ with pairs of numbers between $1$ and $5$ such that there is an edge between two vertices if and only if the corresponding pairs of numbers are disjoint.  An automorphism of $P$ can then be represented by an element of the symmetric group $S_5$ describing its action on each number within the pairs labeling the vertices.  For example, the automorphism obtained by rotating Figure~\ref{pairs} by $-\frac{2\pi}{5}$ can be represented by the permutation $(12345)$.  In fact, it is shown in \cite{DHJS} that the automorphism group of the Petersen graph is isomorphic to $S_5$.  Thus we are interested in which subgroups of $S_5$ are realizable or positively realizable for $P$.

The trivial group can be realized and positively realized by an embedding of $P$ where each edge contains a distinct knot.  The following is a complete list of the non-trivial subgroups of $S_5$, up to isomorphism \cite{P}, where we use $D_n$ to denote the dihedral group of order $2n$.

$$S_5,\  S_4,\  A_5,\  A_4,\  \mathbb{Z}_5 \rtimes \mathbb{Z}_4,\  D_6,\ D_5, \  D_4,\ D_3,\  D_2,\ \mathbb{Z}_6,\  \mathbb{Z}_5,\  \mathbb{Z}_4,\ \mathbb{Z}_3,\ \mathbb{Z}_2$$

Our main results are the following.

\begingroup
\def\thetheorem{\ref{positive}}
\begin{theorem}
The non-trivial groups which are positively realizable for the Petersen graph are $D_5$, $D_3$, $\mathbb{Z}_5$, $\mathbb{Z}_3$, and $\mathbb{Z}_2$.
\end{theorem}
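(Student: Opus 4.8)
The plan is to prove this theorem in two directions: a positive direction (exhibiting embeddings that positively realize each of $D_5$, $D_3$, $\mathbb{Z}_5$, $\mathbb{Z}_3$, $\mathbb{Z}_2$) and a negative direction (ruling out every other non-trivial subgroup of $S_5$ from the list). For the positive direction, I would try to construct explicit embeddings of $P$ in $S^3$ that are invariant under a finite group of orientation-preserving homeomorphisms. The cleanest way to do this is to realize the desired group action by a finite subgroup of $\mathrm{SO}(4)$ acting on $S^3$, place the Petersen graph equivariantly with respect to that action, and verify that no larger symmetry is accidentally introduced (so that the TSG$_+$ is exactly the target group, not something bigger). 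For the cyclic groups $\mathbb{Z}_5$, $\mathbb{Z}_3$, $\mathbb{Z}_2$ and the dihedral groups $D_5$, $D_3$, I would exploit the natural $5$-fold symmetry visible in Figure~\ref{pairs}: the rotation by $-2\pi/5$ already induces the automorphism $(12345)$, and a reflection gives the dihedral extension. Adding identical knots to edges in an orbit-respecting way lets me kill unwanted orientation-reversing symmetries and pin the group down precisely.

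The negative direction is where the real work lies, and I expect it to be the main obstacle. Here I must show that $S_5$, $S_4$, $A_5$, $A_4$, $\mathbb{Z}_5 \rtimes \mathbb{Z}_4$, $D_6$, $D_4$, $D_2$, $\mathbb{Z}_6$, and $\mathbb{Z}_4$ are \emph{not} positively realizable. Several of these fall immediately to the result cited from \cite{TSG1}: since $A_5$ cannot occur as the TSG of any graph in $S^3$, it is certainly not positively realizable, and the same citation or its methods likely dispatch $S_5$ (which contains $A_5$) as well. For the remaining groups I would combine two kinds of constraints. The first is the algebraic structure of $\mathrm{Aut}(P) \cong S_5$ together with which of its subgroups can arise as symmetry groups: I would analyze how each candidate subgroup acts on the vertices, edges, and the special structure of $P$ (its $3$-connectivity and its particular cycle and pentagon structure). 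The second, and decisive, constraint comes from the geometry of finite group actions on $S^3$: by the positive-realizability analogue of the fact that positively realizable groups embed in $\mathrm{SO}(4)$ (or via Smith theory / the resolution of the Geometrization and Smith conjectures), an orientation-preserving finite group action restricts the fixed-point structure, and I would derive a contradiction by examining which vertices or edges a given element must fix.

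The crux of the argument will be showing that certain abstractly-plausible subgroups cannot act on any spatial embedding without forcing a symmetry that $P$'s combinatorics forbid or that is geometrically impossible in $S^3$. In particular, I anticipate that the obstruction for groups like $\mathbb{Z}_4$, $\mathbb{Z}_6$, $D_4$, $D_6$, $A_4$, $S_4$, and $\mathbb{Z}_5 \rtimes \mathbb{Z}_4$ comes from analyzing the order and fixed-point behavior of individual automorphisms: I would identify, for each generator, the permutation in $S_5$ it corresponds to, determine its cycle structure and hence which vertices and edges of $P$ it fixes or permutes, and then invoke results on how a finite-order orientation-preserving homeomorphism of $(S^3,\Gamma)$ can fix a subgraph. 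For instance, an element of order $4$ in $S_5$ (a $4$-cycle) or an element of order $6$ forces a specific fixed-point set structure on $P$ that either contradicts the connectivity of $P$ or contradicts the possible fixed-point sets of a finite cyclic action on $S^3$ (which must be the standard ones, e.g. unknotted circles or pairs of points). The hard part will be making these fixed-point and Smith-theoretic arguments tight enough to eliminate \emph{all} the offending groups simultaneously, and I would organize this as a sequence of lemmas: one establishing the allowable fixed-point sets for cyclic subgroups acting on $P$, one ruling out elements of specific orders, and a final assembly step that uses the subgroup lattice of $S_5$ to conclude that only $D_5$, $D_3$, $\mathbb{Z}_5$, $\mathbb{Z}_3$, and $\mathbb{Z}_2$ survive.
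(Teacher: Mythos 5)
Your proposal has a genuine gap in the negative direction, and it is precisely where the paper's real work lies. First, your dismissal of $A_5$ rests on a misreading of \cite{TSG1}: that result says no alternating group $A_n$ with $n>5$ can be a topological symmetry group in $S^3$; it says nothing about $A_5$ itself (indeed $A_5$ is positively realizable for certain complete graphs). Second, and more fundamentally, your main tool---per-element fixed-point/Smith-theoretic analysis---cannot touch $A_5$, $A_4$, or $D_2$, because every element of these groups has order $2$, $3$, or $5$, and each of $\mathbb{Z}_2$, $\mathbb{Z}_3$, $\mathbb{Z}_5$ \emph{is} positively realizable for the Petersen graph; there is no obstruction at the level of individual automorphisms. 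The paper rules out these groups with a single theorem showing that any group containing $D_2=\mathbb{Z}_2\times\mathbb{Z}_2$ is not realizable, and the key ingredient there is not Smith theory but a Conway--Gordon-type theorem of Nikkuni and Taniyama \cite{NT}: the sum of the linking numbers of the six pairs of disjoint $5$-cycles in any embedding of $P$ is odd. This parity forces each of two commuting involutions to setwise fix a common pair of disjoint $5$-cycles, and a combinatorial analysis (using that no non-trivial automorphism of $P$ fixes a $5$-cycle pointwise) then forces the two involutions to coincide, a contradiction. Nothing in your outline supplies a substitute for this linking-number argument, so $D_2$, $A_4$, and $A_5$ (and with them $S_4$, $S_5$, $D_4$, $D_6$, which the paper also dispatches via $D_2$-containment) are left unexcluded or only partially excluded.

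Your plan for the groups that do contain elements of order $4$ or $6$ ($\mathbb{Z}_4$, $\mathbb{Z}_6$, $\mathbb{Z}_5\rtimes\mathbb{Z}_4$, $D_4$, $D_6$, $S_4$, $S_5$) is essentially the paper's Theorem~\ref{T0}: use the rigidity result of \cite{EF} to replace the homeomorphism by one of finite order, then apply Smith theory to the fixed-point sets of $h$ versus $h^2$ or $h^3$ to get a contradiction---though you should state explicitly that the reduction to a finite-order homeomorphism requires $3$-connectivity of $P$ and \cite{EF}, since Smith theory alone says nothing about an infinite-order homeomorphism inducing a finite-order automorphism. Your positive constructions (equivariant embeddings plus knots on edge orbits to kill excess symmetry) match the paper's strategy in spirit, with the caveat that the paper pins down the upper bound $\mathrm{TSG}_+(\Gamma)\leq D_5$ or $D_3$ by making a particular cycle the unique knotted cycle of its type, and kills inversions with the non-invertible knot $8_{17}$; some such mechanism for certifying that no larger group arises is needed and your proposal only gestures at it.
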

\addtocounter{theorem}{-1}
\endgroup

\begingroup
\def\thetheorem{\ref{Realize}}
\begin{theorem}
The non-trivial groups which are realizable for the Petersen graph are $\mathbb{Z}_5 \rtimes \mathbb{Z}_4$, $D_5$, $D_3$, $\mathbb{Z}_5$, $\mathbb{Z}_4$, $\mathbb{Z}_3$, and $\mathbb{Z}_2$.
\end{theorem}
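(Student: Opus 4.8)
The plan is to leverage the already-established classification of positively realizable groups (Theorem~\ref{positive}) together with the observation from the introduction that every positively realizable group is realizable. Since $\mathbb{Z}_2,\mathbb{Z}_3,\mathbb{Z}_5,D_3,D_5$ are positively realizable, they are realizable and already appear in the asserted list; so the genuine work is to (i) exhibit embeddings realizing the two remaining groups $\mathbb{Z}_4$ and $\mathbb{Z}_5\rtimes\mathbb{Z}_4$, and (ii) show that no other nontrivial subgroup of $S_5$ is realizable.

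The organizing principle for the upper bound in (ii) is the following. If $\Gamma$ realizes $G$, then $H:=\mathrm{TSG}_+(\Gamma)$ is the kernel of the orientation homomorphism $\mathrm{TSG}(\Gamma)\to\mathbb{Z}_2$, so $H\trianglelefteq G$ with $[G:H]\le 2$; moreover $H$ is positively realized by $\Gamma$, so by Theorem~\ref{positive} we have $H\in\{1,\mathbb{Z}_2,\mathbb{Z}_3,\mathbb{Z}_5,D_3,D_5\}$. I would first run this purely group-theoretic filter over the nontrivial subgroups of $S_5$: a group is a candidate only if it has a normal subgroup of index at most $2$ lying in the allowed list. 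Inspecting the subgroup lattice, $S_5$, $S_4$, $A_5$, $A_4$, and $D_4$ fail this test (their normal subgroups of index at most $2$ are $A_5$, $A_4$, $A_5$, $A_4$, and $\{\mathbb{Z}_4,D_2\}$ respectively, none positively realizable), so these are eliminated at once. The groups that survive are exactly those in the target list together with $\mathbb{Z}_6$, $D_6$, and $D_2$.

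For the constructions in (i), I would realize each target group by an orthogonal action. For $\mathbb{Z}_4$, take an order-$4$ orientation-reversing isometry $h$ of $S^3$ (a rotary reflection, whose fixed set is two points) inducing the $4$-cycle $(1234)$; its vertex orbits on $P$ are two $4$-cycles and the transposition $\{1,3\}\leftrightarrow\{2,4\}$, and one embeds $P$ equivariantly with these orbits, then knots the edges within each orbit identically to destroy any symmetry that is not a power of $h$. For $\mathbb{Z}_5\rtimes\mathbb{Z}_4$, I would start from the embedding positively realizing $D_5$ and adjoin an order-$4$ orientation-reversing isometry realizing an element of $S_5$ that normalizes the $5$-cycle, arranging the $D_5$-isometries and the new isometry to generate a copy of $\mathbb{Z}_5\rtimes\mathbb{Z}_4$ in $O(4)$; again identical edge-knotting rules out extra automorphisms. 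In each case $\mathrm{TSG}_+$ comes out to be the orientation-preserving index-$2$ subgroup ($\mathbb{Z}_2$, respectively $D_5$), consistent with Theorem~\ref{positive}.

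The crux, and the step I expect to be the main obstacle, is eliminating the three survivors $\mathbb{Z}_6$, $D_6$, and $D_2$, each of which passes the filter because it contains a positively realizable subgroup ($\mathbb{Z}_3$, $D_3$, $\mathbb{Z}_2$ respectively) of index $2$. Here I would argue geometrically. Using the fact that the topological symmetry group of a $3$-connected graph in $S^3$ is induced by a finite subgroup of $O(4)$ acting on some embedding, every element is realized by a finite-order orthogonal map, so I may use fixed-point data: an orientation-preserving finite-order isometry fixes $S^3$, a circle, or nothing, an orientation-reversing one fixes a $2$-sphere or exactly two points, and $\mathrm{Fix}(g)\subseteq\mathrm{Fix}(g^k)$. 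For both $\mathbb{Z}_6$ and $D_6$ the orientation homomorphism forces the order-$6$ automorphism $(123)(45)$ to be orientation-reversing, since the orientation-preserving part must be $\mathbb{Z}_3$ or $D_3$, neither of which contains an element of order $6$; thus a single lemma, that $(123)(45)$ is induced by no orientation-reversing homeomorphism of any embedding of $P$, eliminates both. I would prove this lemma by combining the fixed-point analysis above with a linking obstruction coming from the intrinsic linkedness of $P$: tracking how the forced orientation-reversing symmetry permutes the disjoint cycle pairs of $P$ and negates their linking numbers should contradict the relevant Conway--Gordon/Sachs parity invariant of $P$. The group $D_2$ I expect to handle analogously, by showing its two required commuting orientation-reversing involutions cannot be realized simultaneously. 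Pinning down exactly which automorphisms admit orientation-reversing realizations, and making the linking obstruction precise for $P$, is where essentially all of the difficulty lies.
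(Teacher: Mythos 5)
Much of your plan is sound, and two pieces of it are genuinely different from the paper. Your index-at-most-two filter (that $\mathrm{TSG}_+(\Gamma)$ is a normal subgroup of $\mathrm{TSG}(\Gamma)$ of index at most $2$, is positively realized by the same embedding, and hence is constrained by Theorem~\ref{positive}) is correct and cleanly eliminates $S_5$, $S_4$, $A_5$, $A_4$, $D_4$; the paper instead eliminates these by showing that any group containing $D_2$ is unrealizable (Theorem~\ref{noD_2}). Your linking-parity idea for order $6$ also works, although you left it at ``should contradict'': the stabilizer of any one of the six pairs of disjoint $5$-cycles is isomorphic to $\mathbb{Z}_5\rtimes\mathbb{Z}_4$, of order $20$, which contains no element of order $6$, so an order-$6$ automorphism fixes no pair; a faithful, fixed-point-free action of a cyclic group of order $6$ on six points must be a $6$-cycle, so all six pairs would have the same linking parity, making $\mathrm{lk}(C_1)+\cdots+\mathrm{lk}(C_6)$ even and contradicting the Nikkuni--Taniyama congruence $\equiv 1 \pmod 2$. (The paper instead rules out order $6$ via the rigidity theorem plus Smith theory, in Theorem~\ref{T0}.)

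The genuine gap is $D_2$, which you defer with ``I expect to handle analogously.'' The analogy does not run by itself: unlike an order-$6$ element, an involution of $P$ can fix pairs of disjoint $5$-cycles (each double transposition fixes exactly two of the six), so the parity count applied to a single involution yields no contradiction---as it must not, since $\mathbb{Z}_2$ is realizable. The contradiction must come from the two commuting involutions jointly, and this is exactly the paper's hardest argument (Theorem~\ref{noD_2}: parity forces each involution to fix a pair in each parity class, commutativity forces a common fixed pair, and then an analysis of reflections of the two $5$-cycles of that pair, using Lemma~\ref{abstractP}, forces $\varphi=\psi$). A computation closer to your sketch does exist---transpositions act freely on the six pairs, and a Klein four-group of double transpositions has all orbits of size $2$, so the odd-cardinality set $O$ cannot be invariant---but it is precisely the computation you have not done. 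Moreover, your fallback geometric route (realize the whole group as a finite subgroup of $O(4)$ and intersect fixed-point sets) is not licensed by the tools in play: the rigidity theorem makes one automorphism at a time finite order on a possibly different embedding, and group-level isometric realization is available for $\mathrm{TSG}_+$, not for groups containing orientation-reversing elements. Two smaller repairs to your constructions: with an orientation-reversing generator, ``identical'' knotting of an orbit is inconsistent unless the knots are achiral or mirror-paired (the paper uses $4_1$ together with $+3_1$/$-3_1$ pairs), and since the subgroup of $S_5$ preserving your $\mathbb{Z}_4$ orbit data is $D_4$, you must invoke the non-realizability of $D_4$ to cap $\mathrm{TSG}$ at $\mathbb{Z}_4$; and for $\mathbb{Z}_5\rtimes\mathbb{Z}_4$ you cannot start from the $D_5$ embedding containing the chiral $5_1$ knot---you need an unknotted symmetric embedding, after which no knotting is needed at all, since $S_5$ is the only subgroup of $S_5$ properly containing $\mathbb{Z}_5\rtimes\mathbb{Z}_4$ and it is already eliminated.
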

\addtocounter{theorem}{-1}
\endgroup


In Section~2, we prove that the groups $S_5$, $S_4$, $A_5$, $A_4$, $\mathbb{Z}_5 \rtimes \mathbb{Z}_4$, $D_6$, $D_4$, $D_2$, $\mathbb{Z}_6$, and $\mathbb{Z}_4$ are not positively realizable for the Petersen graph, and that all of these groups except $\mathbb{Z}_5 \rtimes \mathbb{Z}_4$ and $\mathbb{Z}_4$ are also not realizable for the Petersen graph.  Then in Section~3, we construct embeddings of the Petersen graph which positively realize or realize the remaining subgroups of $S_5$. 

\bigskip

\section{Negative results}\label{negResult}

Observe that no cycles in $P$ have fewer than $5$ vertices and the only pairs of disjoint cycles in the Petersen graph are $5$-cycles. We begin with a lemma about pairs of disjoint $5$-cycles in $P$.  

\begin{lemma}\label{abstractP}  The Petersen graph contains six pairs of disjoint $5$-cycles, and no non-trivial automorphism fixes every vertex of a $5$-cycle.
\end{lemma}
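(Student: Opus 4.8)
The plan is to treat the two assertions separately, and I would prove the statement about automorphisms first, since it feeds into the counting argument. Fix a $5$-cycle $C=(v_0v_1v_2v_3v_4)$ in $P$, where each $v_i$ is a $2$-element subset of $\{1,\dots,5\}$ and consecutive subsets are disjoint. The first thing I would establish is a structural fact: each symbol of $\{1,\dots,5\}$ occurs in exactly two of the $v_i$. Indeed, a symbol cannot lie in two adjacent (hence disjoint) vertices, and since $C_5$ has independence number $2$ no symbol can lie in three vertices; as the five vertices contribute ten symbol-slots over five symbols, each symbol occurs exactly twice. I would also observe that every symbol does occur, because the $2$-subsets of any $4$-element set form a perfect matching in the associated Kneser graph (each $2$-subset has a unique disjoint partner) and so contain no cycle; hence $C$ cannot be confined to four symbols.

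With this in hand, suppose $\sigma\in\mathrm{Aut}(P)=S_5$ fixes every vertex of $C$ setwise. For each $i$, the two vertices at distance two from $v_i$ are $v_{i-2}$ and $v_{i+2}$, and these two are adjacent in $C$ (their indices differ by one modulo $5$), hence disjoint. Therefore $v_i\cap v_{i-2}$ and $v_i\cap v_{i+2}$ are the two distinct singletons whose union is $v_i$. Since $\sigma$ fixes each of $v_i,v_{i-2},v_{i+2}$ as a set, it fixes these intersections, and so fixes both symbols of $v_i$. Ranging over all $i$ and using that every symbol appears in some $v_i$, I conclude that $\sigma$ fixes all of $\{1,\dots,5\}$, so $\sigma$ is trivial; since $C$ was an arbitrary $5$-cycle, this proves the second assertion.

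For the count of pairs I would first count the $5$-cycles of $P$. A direct local count works cleanly: fixing a vertex $v$, its three neighbors are the $2$-subsets of the complementary triple, any $5$-cycle through $v$ uses two of them, and for each such pair one checks there are exactly two completing $5$-cycles, giving six $5$-cycles through $v$. Summing over the ten vertices and dividing by five yields exactly twelve $5$-cycles. (Equivalently, the stabilizer in $S_5$ of a $5$-cycle maps to $\mathrm{Aut}(C_5)=D_5$ with trivial kernel by the assertion just proved, and this $D_5$ is realized by the rotation $(12345)$ and a reflection, so the stabilizer has order $10$ and the orbit has size $120/10=12$; in particular $S_5$ acts transitively on $5$-cycles.)

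Finally, two disjoint $5$-cycles use all ten vertices and so are vertex-complementary; conversely I would show that the complement of every $5$-cycle is again a $5$-cycle. This holds for the explicit outer/inner pair of the standard picture, and by transitivity of the $S_5$-action it then holds for all $5$-cycles. Complementation is thus a fixed-point-free involution on the twelve $5$-cycles (a cycle and its complement are disjoint, hence distinct), so the $5$-cycles fall into $12/2=6$ complementary pairs, and these are exactly the pairs of disjoint $5$-cycles. The step I expect to require the most care is this last one—that complementation carries $5$-cycles to $5$-cycles—since the tally of six pairs rests on it; transitivity of the automorphism action is what reduces it to a single explicit verification.
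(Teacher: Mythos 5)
Your proposal is correct, but it takes a genuinely different route from the paper's. For the rigidity claim, the paper argues directly on the graph: if an automorphism fixes every vertex of a $5$-cycle $X$, it must fix the unique off-cycle edge at each such vertex, hence fix that edge's endpoint on the complementary $5$-cycle $Y$, hence fix all ten vertices; this is purely local and never invokes $\mathrm{Aut}(P)\cong S_5$. Your argument instead runs through the Kneser labeling together with the fact (cited in the paper from the Holton--Sheehan book) that every automorphism is induced by a permutation of the five symbols --- legitimate in this paper's context, but a heavier external input, and the occurrence-counting of symbols around the cycle is the key combinatorial step replacing the paper's edge-chasing. For the count, the paper localizes: since a pair of disjoint $5$-cycles covers all ten vertices, every pair contains a $5$-cycle through the fixed vertex $\{1,2\}$, and the six such cycles are simply listed. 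You instead count all twelve $5$-cycles, bound the stabilizer of a $5$-cycle by $D_5$ using the rigidity statement, deduce transitivity of the $S_5$-action, and exhibit complementation as a fixed-point-free involution, giving $12/2=6$ pairs. Your route proves strictly more (the total of twelve $5$-cycles, transitivity, that the complement of every $5$-cycle is a $5$-cycle, and the order-$10$ stabilizer), and it makes explicit a point the paper leaves tacit, namely that each listed $5$-cycle through $\{1,2\}$ really does lie in a disjoint pair. The costs are length, the reliance on $\mathrm{Aut}(P)\cong S_5$, and the fact that your count depends on the rigidity claim (the paper's two halves are independent); note also that your orbit-stabilizer parenthetical yields transitivity only in combination with the direct count of twelve, since by itself it measures a single orbit.
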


\begin{proof} Since each vertex is used in every pair of disjoint $5$-cycles, we obtain every such pair by listing all $5$-cycles containing the vertex $\{1,2\}$. We see from the list below that there are exactly six such $5$-cycles.

\begin{align*}\{1,2\},\{3,4\},\{1,5\},\{2,4\},\{3,5\}\\
\{1,2\},\{3,4\},\{1,5\},\{2,3\},\{4,5\}\\
\{1,2\},\{3,4\},\{2,5\},\{1,3\},\{4,5\}\\
\{1,2\},\{3,4\},\{2,5\},\{1,4\},\{3,5\}\\
\{1,2\},\{3,5\},\{1,4\},\{2,3\},\{4,5\}\\
\{1,2\},\{3,5\},\{2,4\},\{1,3\},\{4,5\}\\\end{align*}

Now suppose that $\varphi$ is an automorphism of $P$ which fixes every vertex on a $5$-cycle $X$.  Any vertex  in $X$ has two incident edges in $X$ and one incident edge not in $X$.  Thus $\varphi$ has to fix the one incident edge $e$ not in $X$.  This implies that $\varphi$ fixes both vertices of $e$.  Since $P$ has no cycles with fewer than $5$ vertices, only one vertex of $e$ can be on $X$.  Hence $\varphi$ fixes a vertex of $e$ that is on the complementary $5$-cycle $Y$.  Since $\varphi$ fixes every vertex of $X$, it fixes all of the edges joining $X$ and $Y$, and hence every vertex of $Y$.  Since all $10$ vertices of $P$ are on $X\cup Y$, this implies that $\varphi$ is the identity.  \end{proof}

\begin{theorem}\label{noD_2} Let $G$ be a group containing $D_2$.  Then $G$  is not realizable or positively realizable for the Petersen graph.
\end{theorem}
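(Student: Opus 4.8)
The plan is to prove the stronger statement that no embedding $\Gamma$ of $P$ admits a group of homeomorphisms of $(S^3,\Gamma)$ inducing a subgroup isomorphic to $D_2 = \mathbb{Z}_2\times\mathbb{Z}_2$. Since any $G\supseteq D_2$ contains such a subgroup, and since (as observed in the introduction) positive realizability implies realizability, this rules out realizability and positive realizability of every such $G$ simultaneously. By the standard realization results for $3$-connected graphs, such a $D_2$ is induced by a finite group $H=\{1,h_1,h_2,h_3\}\cong\mathbb{Z}_2\times\mathbb{Z}_2$ of homeomorphisms of $(S^3,\Gamma)$, which I may take to be simplicial so that Smith theory governs its fixed point sets.

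First I would record two ingredients. Using the labeling of Figure~\ref{pairs}, a transposition $(ab)\in S_5$ fixes the vertex $\{a,b\}$ together with all three of its neighbors, so its fixed subgraph is a claw $K_{1,3}$; a double transposition $(ab)(cd)$ fixes exactly the two adjacent vertices $\{a,b\},\{c,d\}$ and the edge joining them. Since an involution of an arc fixing both endpoints is the identity, any edge with both endpoints fixed is fixed pointwise. For an orientation preserving involution $\mathrm{Fix}(h)$ is a circle or empty, hence contains no degree-$3$ vertex of the fixed subgraph; thus \emph{a transposition can never be induced by an orientation preserving homeomorphism}. For an orientation reversing involution $\mathrm{Fix}(h)$ is $S^0$ or $S^2$, so an induced transposition or double transposition forces $\mathrm{Fix}(h)=S^2$. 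Up to conjugacy there are exactly two copies of $D_2$ in $S_5$: type~(i), $\{e,(ab),(cd),(ab)(cd)\}$, which contains transpositions; and type~(ii), $\{e,(ab)(cd),(ac)(bd),(ad)(bc)\}$, consisting of three double transpositions. In each type the orientation homomorphism $H\to\mathbb{Z}_2$ is trivial or has kernel of order $2$, and Borel's formula $3-f_H=\sum_i(f_{H_i}-f_H)$ (with $f_{\bullet}=\dim\mathrm{Fix}(\bullet)$, summed over the three order-$2$ subgroups $H_i$, for $\mathbb{Z}_2\times\mathbb{Z}_2$ acting on $S^3$) pins down $\dim\mathrm{Fix}(H)$.

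For type~(ii), if all $h_i$ are orientation preserving then each $\mathrm{Fix}(h_i)$ is a circle $C_i$ containing the fixed edge $e_i$, and Borel's formula gives $\dim\mathrm{Fix}(H)=0$, so $\mathrm{Fix}(H)$ is two points. The restriction $h_j|_{C_i}$ is a reflection whose fixed points are $\mathrm{Fix}(H)$, and since $h_j$ swaps the endpoints of $e_i$, one of these points lies in the interior of $e_i$; as this holds for the three disjoint interiors $e_1,e_2,e_3$ we would need three distinct points in a two point set, a contradiction. If instead exactly one $h_i$ is orientation preserving, Borel's formula gives $\dim\mathrm{Fix}(H)=1$, forcing $\mathrm{Fix}(H)=C_1$, the fixed circle of that element, so the orientation reversing $h_2$ fixes $C_1$ pointwise; but $C_1$ contains a vertex of $P$ that is not fixed by the double transposition induced by $h_2$, a contradiction.

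The main obstacle is type~(i). Here the degree argument forces both transpositions to be orientation reversing, so $(ab)(cd)$ is induced by the unique orientation preserving element $h_1$, with $\mathrm{Fix}(h_1)=C_1$ a circle, and Borel's formula again gives $\dim\mathrm{Fix}(H)=1$, so $\mathrm{Fix}(H)=C_1\subseteq\mathrm{Fix}(h_2)$. To exploit this I would exhibit an explicit $(ab)(cd)$-invariant $5$-cycle $Z$ in $P$ on which $h_1$ acts as a reflection fixing a single vertex; the second fixed point of this reflection lies in the interior of an edge $f$ of $Z$ whose endpoints are swapped by $(ab)(cd)$, hence is a point of $C_1$. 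Since $C_1\subseteq\mathrm{Fix}(h_2)$, this interior point is fixed by $h_2$; but $h_2$ induces the transposition $(ab)$, which carries $f$ to a different edge of $P$ and therefore displaces every interior point of $f$. This contradiction, together with the cases above, completes the argument; the delicate step is selecting $Z$ so that the transposition genuinely moves the relevant edge, for which the explicit list of $5$-cycles in the proof of Lemma~\ref{abstractP} is the natural bookkeeping device.
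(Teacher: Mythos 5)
Your Smith-theoretic case analysis is largely well executed---the claw obstruction showing a transposition cannot be induced by an orientation preserving finite-order homeomorphism, the Borel formula computations, and the contradictions you derive in both conjugacy classes of $D_2\le S_5$ are all correct---but the argument stands or falls on its first sentence, and that is where there is a genuine gap. The claim that $D_2\le \mathrm{TSG}(\Gamma)$ is ``induced by a finite group $H\cong\mathbb{Z}_2\times\mathbb{Z}_2$ of homeomorphisms'' is not a standard available result in the generality you need. The realization theorems in the literature this paper draws on are: Theorem 1 of \cite{EF}, which applies to a \emph{single} automorphism (it produces a re-embedding on which that one automorphism is induced by a finite-order homeomorphism; this is exactly what the paper invokes in Theorem~\ref{T0}), and the finiteness theorem of \cite{TSG1}, which for a $3$-connected graph produces a re-embedding on which $\mathrm{TSG}_+$ is induced by an isomorphic finite subgroup of $\mathrm{Diff}_+(S^3)$---\emph{orientation preserving} homeomorphisms only. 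Neither yields a finite $\mathbb{Z}_2\times\mathbb{Z}_2$ action when some elements of the $D_2$ are induced only by orientation reversing homeomorphisms: applying \cite{EF} to $h_1$, $h_2$, $h_3$ separately gives three different re-embeddings and three finite-order homeomorphisms with no reason to commute or to preserve a common embedding. Moreover, your own analysis shows the orientation reversing case is not peripheral: for a type~(i) subgroup the two transpositions \emph{must} be orientation reversing (that is precisely your claw argument), and type~(ii) has the mixed case. So the cases that most need the group-level finite realization are exactly the ones where no cited theorem provides it. (The ``positively realizable'' half of the statement can be salvaged: citing \cite{TSG1}, all of $H$ may be taken orientation preserving, type~(i) dies instantly and type~(ii) dies by your three-points-in-a-two-point-set contradiction.)

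This gap is, presumably, why the paper's proof takes a completely different route that never passes to finite-order homeomorphisms at all. It uses the Nikkuni--Taniyama refinement of Conway--Gordon \cite{NT}---the sum of the linking numbers of the six pairs of disjoint $5$-cycles in any embedding of $P$ is odd---together with the fact that an \emph{arbitrary} homeomorphism of $(S^3,\Gamma)$, orientation preserving or reversing, tame or wild, finite order or not, preserves the parity of linking numbers. A parity count on the commuting involutions then forces some pair of disjoint $5$-cycles to be setwise invariant under both, and from there a purely combinatorial argument using Lemma~\ref{abstractP} forces the two involutions to be equal, a contradiction. If you want to retain your approach for the full (not just positive) realizability statement, you would need either to prove the missing equivariant re-embedding theorem for subgroups of $\mathrm{TSG}$ containing orientation reversing elements, or to replace the type~(i) and mixed type~(ii) cases with an argument, like the paper's, that works for arbitrary homeomorphisms of the given embedding.
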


\begin{proof}   Suppose that $\Gamma$ realizes $G$ for the Petersen graph.  Since $D_2\leq G$,  there are homeomorphisms $g$ and $h$ of $(S^3, \Gamma)$ inducing automorphisms $\varphi$ and $\psi$ respectively  on $\Gamma$ such that $\langle \varphi, \psi\rangle= D_2$, where $\varphi^2=\psi^2=1$ and $\varphi\psi=\psi\varphi$.

Let $C_1,\dots, C_6$ denote the six pairs of disjoint $5$-cycles in $\Gamma$.  It follows from \cite{NT} that $\mathrm{lk}(C_1)+...+\mathrm{lk}(C_6)\equiv 1\ ({\rm mod}\ 2)$.  Thus an odd number of the $C_i$ have odd linking number and an odd number of the $C_i$ have even linking number.  Also, regardless of whether $g$ and $h$ are orientation preserving or reversing, they each preserve the parity of the linking number.  Thus $\varphi$ and $\psi$ each permute the elements within the set $O$ of all $C_i$ with odd linking number and within the set $E$ of all $C_i$ with even linking number.  Since $\varphi$ and $\psi$ are involutions and $|O|$ and $|E|$ are both odd, they each setwise fix at least one $C_i$ in $O$ and at least one $C_i$  in $E$.  

Now suppose for the sake of contradiction that no $C_i$ is setwise fixed by both $\varphi$ and $\psi$.  Then $O$ and $E$ must each contain three elements.  Thus without loss of generality $O=\{C_1,C_2,C_3\}$, $\psi(C_1)=C_1$, $\varphi(C_2)=C_2$, and $\varphi(C_1)=C_3$.  Then $\psi(C_3)=\psi\varphi(C_1)=\varphi\psi(C_1)=\varphi(C_1)=C_3$.  Thus $\psi(C_3)=C_3$.  It follows that $\psi$ setwise fixes each of $C_1$, $C_2$, and $C_3$.  Hence there is a $C_i$ which is setwise fixed by both $\varphi$ and $\psi$.   Without loss of generality, $C_1$ is fixed by both $\psi$ and $\varphi$. 

 Let $X_1$ and $Y_1$ be the $5$-cycles in $C_1$, and suppose that $\varphi(X_1)=X_1$ and $\varphi(Y_1)=Y_1$.  By Lemma~\ref{abstractP}, $\varphi$ cannot fix every vertex on $X_1$ or on $Y_1$.  Since $\mathrm{order}(\varphi)=2$ and $X_1$ and $Y_1$ are $5$-cycles, this implies that there are unique vertices $v$ on $X_1$ and $w$ on $Y_1$ which are fixed by $\varphi$.  Now $\varphi\psi(v)=\psi\varphi(v)=\psi(v)$.  Since $v$ is the unique vertex on $X_1$ which is fixed by $\varphi$, this implies that $\psi(v)=v$.  Similarly, $\psi(w)=w$.  But since $\psi$ setwise fixes $C_1=X_1\cup Y_1$, it follows that $\psi(X_1)=X_1$ and $\psi(Y_1)=Y_1$.  Now $\varphi$ and $\psi$ are both involutions of $P$ leaving $X_1$ and $Y_1$ setwise invariant fixing $v$ and $w$.  But this implies that $\varphi=\psi$, which is impossible since $\langle \varphi, \psi\rangle=D_2$.  Thus $\varphi$ cannot setwise fix $X_1$ and $Y_1$.  Similarly, $\psi$ cannot setwise fix $X_1$ and $Y_1$. 

Thus both $\varphi$ and $\psi$ interchange $X_1$ and $Y_1$.  Now let $\alpha=\varphi\psi$.  Then $\alpha(X_1)=X_1$ and $\alpha (Y_1)=Y_1$, and $\alpha$ has order $2$ and commutes with both $\varphi$ and $\psi$.  But now we can replace $\varphi$ by $\alpha$ in the above argument to again get a contradiction.  It follows that $G$ is not realizable for the Petersen graph, and hence $G$ is also not positively realizable for the Petersen graph.\end{proof}

Observe that since the groups $S_5$, $S_4$, $A_5$ $A_4$, $D_6$, $D_4$, and $D_2$ all contain $D_2$, the following corollary is immediate.

\begin{corollary}  \label{cor}The groups $S_5$, $S_4$, $A_5$ $A_4$, $D_6$, $D_4$, and $D_2$  are not realizable or positively realizable for the Petersen graph.
\end{corollary}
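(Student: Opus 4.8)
The plan is to deduce this as an immediate consequence of Theorem~\ref{noD_2}. That theorem asserts that \emph{any} group containing a subgroup isomorphic to $D_2$ fails to be realizable or positively realizable for the Petersen graph. Hence it suffices to verify the purely group-theoretic fact that each of $S_5$, $S_4$, $A_5$, $A_4$, $D_6$, $D_4$, and $D_2$ contains a copy of $D_2\cong\mathbb{Z}_2\times\mathbb{Z}_2$, after which Theorem~\ref{noD_2} applies to each group and yields the conclusion.

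First I would exhibit the required subgroups explicitly, viewing the relevant groups inside $\mathrm{Aut}(P)\cong S_5$ where convenient. In $S_5$ and $S_4$ the elements $\{e,(12),(34),(12)(34)\}$ form a Klein four-group. In $A_5$ and $A_4$ the even permutations $\{e,(12)(34),(13)(24),(14)(23)\}$ form a copy of $D_2$. In $D_4$ (order $8$) the half-turn $r^2$ together with any reflection $s$ commute and generate $\{1,r^2,s,r^2s\}\cong\mathbb{Z}_2\times\mathbb{Z}_2$, and in $D_6\cong S_3\times\mathbb{Z}_2$ (order $12$) one takes any reflection together with the central involution to obtain a $D_2$. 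Finally, $D_2$ trivially contains itself. With these inclusions in hand, each group has the form required by Theorem~\ref{noD_2}.

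There is essentially no obstacle here: the corollary is a direct specialization of Theorem~\ref{noD_2}, and its only content is the elementary verification above that each listed group possesses a $\mathbb{Z}_2\times\mathbb{Z}_2$ subgroup. The genuine work has already been carried out in the proof of Theorem~\ref{noD_2}, which combined the combinatorics of the six pairs of disjoint $5$-cycles (Lemma~\ref{abstractP}) with the parity of their linking numbers drawn from \cite{NT}.
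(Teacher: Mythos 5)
Your proposal is correct and is exactly the paper's argument: the paper also deduces the corollary immediately from Theorem~\ref{noD_2} by observing that each listed group contains $D_2$, and your explicit Klein four-subgroups (which are all accurate) simply spell out that observation.
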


\begin{theorem}\label{T0}  Let $\Gamma$ be an embedding of the Petersen graph in $S^3$ and let $g$ be a homeomorphism of $(S^3,\Gamma)$. Then $g$ cannot induce an automorphism of $\Gamma$ of order $6$; and if $g$ is orientation preserving then $g$ also cannot induce an automorphism of $\Gamma$ of order $4$.  \end{theorem}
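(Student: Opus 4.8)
The plan is to treat the two assertions separately, according to the cycle structure in $\mathrm{Aut}(P)\cong S_5$ of the automorphism $\varphi$ that $g$ induces. An element of order $6$ in $S_5$ is necessarily a product of a disjoint $3$-cycle and transposition, while an element of order $4$ is necessarily a $4$-cycle. For the order-$6$ claim I would reuse the linking-number machinery of Theorem~\ref{noD_2}, since that argument is insensitive to the orientation behavior of $g$; for the order-$4$ claim, which genuinely requires $g$ to be orientation preserving, I would instead analyze the fixed-point set of $g$ after reducing to a standard finite action.

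\textbf{Order 6.} Let $C_1,\dots,C_6$ be the six pairs of disjoint $5$-cycles, and recall from \cite{NT} that $\sum_i \mathrm{lk}(C_i)\equiv 1 \pmod 2$, so the set $O$ of $C_i$ with odd linking number has odd cardinality; since $g$ preserves the parity of linking numbers, $\varphi$ leaves both $O$ and its complement $E$ invariant. The key combinatorial input I would establish first is that an order-$6$ automorphism fixes no $C_i$: if $\varphi(C_i)=C_i$, then $\varphi^2$, which has order $3$, would leave invariant each of the two $5$-cycles in $C_i$ and hence restrict to one of them as an element of $\mathrm{Aut}(C_5)\cong D_5$; as $D_5$ has no element of order $3$, this restriction would be trivial, so $\varphi^2$ would fix every vertex of that $5$-cycle, contradicting Lemma~\ref{abstractP}. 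Granting this, $|O|\in\{1,3,5\}$ cannot be $1$ or $5$ (that would force a fixed $C_i$), so $|O|=|E|=3$; but then $6=\mathrm{order}(\varphi)=\mathrm{lcm}(\mathrm{order}(\varphi|_O),\mathrm{order}(\varphi|_E))$ with $\varphi|_O,\varphi|_E\in S_3$ forces one of the two restrictions to be a transposition, which fixes an element of $O$ or of $E$ — again a fixed $C_i$. This contradiction rules out order $6$ for any $g$.

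\textbf{Order 4.} Write $\varphi=(abcd)$ and observe that the vertices $u=\{a,c\}$ and $w=\{b,d\}$ are disjoint, hence adjacent, and are interchanged by $\varphi$, while $\varphi^2=(ac)(bd)$ fixes each of them. By the standard reduction for finite group actions on $S^3$ (the solution to the Smith Conjecture together with Geometrization), I would assume $\langle g\rangle$ acts orthogonally, so that $g$ is conjugate to an element of $SO(4)$. Then $g^2$ is an orientation-preserving involution fixing the points $u$ and $w$, so its fixed-point set is a circle $K$ containing $u$ and $w$; since an orientation-preserving self-homeomorphism of an arc fixing both endpoints is the identity, $g^2$ fixes the edge $e=\overline{uw}$ pointwise, giving $e\subset K$. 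Because $g$ commutes with $g^2$ it preserves $K$, and the orthogonal form shows that $g|_K$ is either the identity or a rotation by $\pi$. The first case contradicts $\varphi(u)=w$; in the second case $u$ and $w$ are antipodal on $K$ and $g$ carries the arc $e$ onto the complementary arc of $K$, contradicting $\varphi(e)=e$. Hence no orientation-preserving $g$ induces an automorphism of order $4$.

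\textbf{Main obstacle.} I expect the delicate points to be, for order $6$, pinning down that an order-$6$ automorphism fixes no pair $C_i$ and then organizing the parity bookkeeping into a genuine contradiction in every case; and for order $4$, justifying the passage to an orthogonal (finite, linear) model so that the behavior of $g$ on the fixed circle $K$ is fully controlled. This last step is where the orientation-preserving hypothesis is indispensable, which is consistent with the fact that order $4$ is realizable by an orientation-reversing homeomorphism.
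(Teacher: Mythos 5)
Your order-$6$ argument takes a genuinely different route from the paper's, and it is mostly sound. The paper handles both orders uniformly: it first invokes Theorem 1 of \cite{EF} (this is where $3$-connectivity enters) to replace $g$ by a finite-order homeomorphism $h$ of a re-embedding $\Gamma'$ inducing the same automorphism, and then runs Smith theory \cite{Smith} on $\mathrm{fix}(h)$ versus $\mathrm{fix}(h^3)$, ending with a separating $2$-sphere argument. Your approach instead reuses the linking-parity bookkeeping of Theorem~\ref{noD_2} via \cite{NT}, which applies to an arbitrary homeomorphism $g$ and so avoids \cite{EF} and Smith theory entirely for this half --- an attractive, more elementary alternative, and your key claim (an order-$6$ automorphism fixes no pair $C_i$, since $D_5$ has no element of order $3$ plus Lemma~\ref{abstractP}) is correct. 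But the last step has a gap: the equation $6=\mathrm{order}(\varphi)=\mathrm{lcm}(\mathrm{order}(\varphi|_O),\mathrm{order}(\varphi|_E))$ conflates the order of $\varphi$ as an automorphism of $P$ with the order of the permutation it induces on the six pairs; the latter only \emph{divides} the former. What your argument actually shows is that both restrictions are $3$-cycles, so the induced permutation has order $3$ --- equivalently, the transposition $\varphi^3$ fixes all six $C_i$ --- and that is not yet a contradiction. The patch must be supplied: an involution that fixes a pair $C_i$ either preserves each of its two $5$-cycles, in which case by Lemma~\ref{abstractP} its restriction to each is a reflection fixing exactly one vertex (two fixed vertices in all), or swaps them (no fixed vertices); but a transposition in $S_5\cong\mathrm{Aut}(P)$ fixes four vertices of $P$. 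Hence $\varphi^3$ fixes no $C_i$, and your contradiction goes through.

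The order-$4$ half, however, has a genuine gap at its foundation: you treat $\langle g\rangle$ as a finite group to which the Smith Conjecture and geometrization can be applied. But $g$ is only assumed to be a homeomorphism of $(S^3,\Gamma)$ inducing an order-$4$ automorphism; as a homeomorphism of $S^3$ it may have infinite order ($g^4$ maps $\Gamma$ to itself inducing the identity automorphism, but need not be the identity of $S^3$), so there is no finite group action to orthogonalize. This is exactly the gap that Theorem 1 of \cite{EF} fills in the paper's proof: for a $3$-connected graph, an automorphism induced by any homeomorphism is induced by a \emph{finite-order} homeomorphism of some possibly different embedding, and the contradiction is then derived for that embedding. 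That rigidity theorem is the essential non-elementary input and cannot be waved in as a ``standard reduction.'' Two further problems would persist even after this repair: a finite-order homeomorphism of $S^3$ need not be conjugate into $\mathrm{SO}(4)$ (there exist wild finite-order homeomorphisms, with wildly embedded fixed-point sets), which is why the paper uses only Smith theory, valid for arbitrary finite-order homeomorphisms; and your assertion that an orientation-preserving self-homeomorphism of an arc fixing both endpoints is the identity is false (consider $x\mapsto x^2$ on $[0,1]$) --- it is the periodicity of $g^2$ that forces it to fix the edge $\overline{uw}$ pointwise. Once a finite-order, tame model is legitimately available, your antipodal-rotation endgame does work, but note the paper's is shorter: $\mathrm{fix}(h)\subseteq\mathrm{fix}(h^2)$ and both are circles by Smith theory, hence equal, contradicting that $h^2$ fixes the vertices $\{2,4\}$ and $\{1,3\}$ while $h$ does not.
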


\begin{proof}  Suppose that $g$ induces an automorphism $\varphi$ of $\Gamma$ of order $4$ or $6$. Since the Petersen graph is $3$-connected, we can apply Theorem 1 of \cite{EF} to obtain an embedding $\Gamma'$ of $P$ in $S^3$ such that $\varphi$ is induced on $\Gamma'$ by a finite order homeomorphism $h:(S^3,\Gamma')\to (S^3,\Gamma')$. Furthermore, since $P$ is non-planar, by Smith Theory \cite{Smith}, a finite order homeomorphism of $S^3$ cannot pointwise fix $P$. Hence the order of $h$ must be the same as the order of $\varphi$.

 If $\varphi$ has order $4$, then without loss of generality, we can label the vertices of $\Gamma'$ with $2$-element sets of numbers so that $\varphi$ is the element $(1234)$ of $S_5$; and if $\varphi$ has order $6$, then without loss of generality, we can label the vertices of $\Gamma'$ with $2$-element sets of numbers so that $\varphi$ is the element $(123)(45)$ of $S_5$.  If $\varphi=(1234)$, then some point on the edge with vertices $\{2,4\}$ and $\{1,3\}$ is fixed by $h$; and if $\varphi=(123)(45)$, then vertex $\{4,5\}$ is fixed by $h$. Let $\mathrm{fix}(h)$ denote the fixed point set of $h$, then in either case, $\mathrm{fix}(h)\not=\emptyset$.  Hence by Smith Theory \cite{Smith}, if $h$ is orientation preserving then $\mathrm{fix}(h)$ is homeomorphic to $S^1$, and if $h$ is orientation reversing then $\mathrm{fix}(h)$ consists of two points or is homeomorphic to $S^2$.  
 
Now suppose $h$ has order $6$.  Then $h$ induces $\varphi=(123)(45)$ on $P$, and hence $h^3$ induces $(45)$ on $P$.  Thus the vertices $\{1,2\}$, $\{2,3\}$, and $\{1,3\}$ are fixed by $h^3$ but were not fixed by $h$.  This means that $\mathrm{fix}(h^3)$ is larger than $\mathrm{fix}(h)$.  Thus $h$ must be orientation reversing, and $h^3$ must pointwise fix a $2$-sphere containing vertices $\{4,5\}$, $\{1,2\}$, $\{2,3\}$, and $\{1,3\}$ and no other vertices.  Let $A$ and $B$ be the components of $S^3-\mathrm{fix}(h^3)$.  Then $h^3$ interchanges $A$ and $B$.  

Since $h^3$ induces the automorphism $(45)$ on $P$, without loss of generality $\{1,4\}\in A$ and $\{1,5\}\in B$.  The Petersen graph has an edge with vertices $\{1,4\}$ and $\{2,5\}$ which is not setwise fixed by $h^3$.  Hence this edge must be disjoint from $\mathrm{fix}(h^3)$.  It follows that $\{2,5\}\in A$.  However, there are also edges between $\{3,4\}$ and both $\{1,5\}\in B$ and $\{2,5\}\in A$, and again neither of these edges are setwise fixed by $h^3$.  But this would imply that $\{3,4\}\in B$ and $\{3,4\}\in A$, which is impossible.  Thus $h$ cannot have order $6$.

Finally, suppose that $h$  is orientation preserving and has order $4$.  Then since $\mathrm{fix}(h)\not=\emptyset$, $\mathrm{fix}(h)$ and $\mathrm{fix}(h^2)$ are both homeomorphic to $S^1$.  Since every point which is fixed by $h$ must also be fixed by $h^2$, in fact $\mathrm{fix}(h)= \mathrm{fix}(h^2)$.  However, since $h$ induces $(1234)$ on $\Gamma$, vertices $\{2,4\}$ and $\{1,3\}$ are fixed by $h^2$ but were not fixed by $h$. By this contradiction we conclude that if $h$ is orientation preserving, then $h$ cannot have order $4$.  \end{proof}

The following corollary is immediate from Theorem~\ref{T0}.

\begin{corollary}\label{46cor} The groups $\mathbb{Z}_5\rtimes\mathbb{Z}_4,\ \mathbb Z_6$, and $\mathbb Z_4$ are not positively realizable for the Petersen graph, and the group $\mathbb{Z}_6$ is not realizable for the Petersen graph.
\end{corollary}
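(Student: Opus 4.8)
The plan is to deduce this directly from Theorem~\ref{T0} together with the definitions of realizability and positive realizability, so the only real work is bookkeeping: identifying which of the three groups contains an element of order $4$ or of order $6$, and tracking which orientation hypothesis Theorem~\ref{T0} requires in each case. First I would unwind the definitions. If a group $G$ is positively realized by an embedding $\Gamma$, then $\mathrm{TSG}_+(\Gamma)=G$, so each element of $G$ is induced on $\Gamma$ by an \emph{orientation preserving} homeomorphism of $(S^3,\Gamma)$; if $G$ is merely realized, then $\mathrm{TSG}(\Gamma)=G$, so each element is induced by a homeomorphism of $(S^3,\Gamma)$ of some, possibly orientation reversing, type.

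Next I would record the relevant element orders: both $\mathbb{Z}_4$ and $\mathbb{Z}_5\rtimes\mathbb{Z}_4$ contain an element of order $4$ (the generator of the $\mathbb{Z}_4$ factor), while $\mathbb{Z}_6$ contains an element of order $6$. For the positive realizability claims, I would suppose toward a contradiction that one of the three groups is positively realized by some $\Gamma$. Then its order-$4$ element (for $\mathbb{Z}_4$ and $\mathbb{Z}_5\rtimes\mathbb{Z}_4$) or its order-$6$ element (for $\mathbb{Z}_6$) would be induced on $\Gamma$ by an orientation preserving homeomorphism, which Theorem~\ref{T0} forbids in both cases. For the realizability claim about $\mathbb{Z}_6$, I would instead use the stronger half of Theorem~\ref{T0}: an order-$6$ automorphism cannot be induced by \emph{any} homeomorphism of $(S^3,\Gamma)$, orientation preserving or not, so $\mathbb{Z}_6$ is not realizable.

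Finally, I would emphasize why the realizability conclusion stops at $\mathbb{Z}_6$ and does not extend to $\mathbb{Z}_4$ or $\mathbb{Z}_5\rtimes\mathbb{Z}_4$. Theorem~\ref{T0} obstructs an order-$4$ automorphism only when the inducing homeomorphism is orientation preserving, leaving open the possibility that an orientation reversing homeomorphism induces such an automorphism; this is precisely the loophole that allows these two groups to remain candidates for realizability (and indeed they are realized later in the paper). There is no genuine obstacle in this argument: its entire content is matching the element orders present in each group against the two separate conclusions of Theorem~\ref{T0}.
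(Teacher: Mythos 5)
Your proposal is correct and matches the paper's reasoning exactly: the paper states this corollary is immediate from Theorem~\ref{T0}, and your write-up is precisely the intended unpacking — order-$4$ elements in $\mathbb{Z}_4$ and $\mathbb{Z}_5\rtimes\mathbb{Z}_4$ rule out positive realizability via the orientation preserving half of the theorem, while the order-$6$ element of $\mathbb{Z}_6$ is blocked by the unconditional half, killing both positive realizability and realizability. Your closing remark about why the argument cannot exclude realizability of $\mathbb{Z}_4$ and $\mathbb{Z}_5\rtimes\mathbb{Z}_4$ is also consistent with the paper, which later realizes both groups.
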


\bigskip

\section{Positive results}\label{mainResult}

Now we prove Theorem~\ref{positive}, which we restate here for the convenience of the reader.

\begin{theorem}\label{positive}
The non-trivial groups which are positively realizable for the Petersen graph are $D_5$, $D_3$, $\mathbb{Z}_5$, $\mathbb{Z}_3$, and $\mathbb{Z}_2$.
\end{theorem}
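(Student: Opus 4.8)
The plan is to combine the negative results of Section~\ref{negResult} with five explicit constructions. First I would record the reduction those results already force. By Corollary~\ref{cor} the groups $S_5$, $S_4$, $A_5$, $A_4$, $D_6$, $D_4$, $D_2$ are not positively realizable, and by Corollary~\ref{46cor} neither are $\mathbb{Z}_5\rtimes\mathbb{Z}_4$, $\mathbb{Z}_6$, $\mathbb{Z}_4$. Deleting these ten groups from the list of nontrivial subgroups of $S_5$ leaves exactly $D_5$, $D_3$, $\mathbb{Z}_5$, $\mathbb{Z}_3$, $\mathbb{Z}_2$. Thus it suffices to exhibit, for each of these five groups $G$, an embedding $\Gamma$ with $\mathrm{TSG}_+(\Gamma)=G$.

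For the dihedral groups $D_5$ and $D_3$ the argument is short. I would realize $G$ as a subgroup of $\mathrm{SO}(3)\leq\mathrm{SO}(4)$ acting on $S^3$, where the $n$-fold rotation induces the appropriate $5$- or $3$-cycle and the remaining involutions are realized as rotations by $\pi$ about equatorial axes (orientation preserving on $S^3$), and I would embed $P$ invariantly near the equatorial $S^2$, resolving the crossings equivariantly. This gives $G\leq\mathrm{TSG}_+(\Gamma)$. The only subgroups of $S_5$ properly containing $D_5$ are $\mathbb{Z}_5\rtimes\mathbb{Z}_4$, $A_5$, $S_5$, and the only ones properly containing $D_3=\langle(123),(12)\rangle$ are $D_6$, $S_4$, $S_5$; each contains $D_2$ or an element of order $4$, so none is positively realizable by Corollaries~\ref{cor} and~\ref{46cor}. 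Since $\mathrm{TSG}_+(\Gamma)$ is itself positively realized by $\Gamma$, it cannot be any of these overgroups, whence $\mathrm{TSG}_+(\Gamma)=G$.

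For $\mathbb{Z}_3$ and $\mathbb{Z}_2$ I would start from an embedding invariant under an orientation-preserving rotation $\rho$ inducing a $3$-cycle or an involution, and then tie a pairwise non-mirror family of chiral knots into the edges, constant on the orbits of $\langle\rho\rangle$ but with distinct knot types on distinct orbits. Because a homeomorphism must carry the knot in each edge to a knot of the same type (with handedness reversed exactly when it reverses orientation), using chiral knots with all mirrors distinct forces every element of $\mathrm{TSG}(\Gamma)$ to be orientation preserving and to fix each $\langle\rho\rangle$-orbit of edges setwise. A direct check of the action of $S_5$ on the edges then shows that the involutions which would enlarge $\mathbb{Z}_n$ to a dihedral group permute these orbits nontrivially (for instance $(12)$ interchanges two of the five $\langle(123)\rangle$-orbits while $(12)(45)$ interchanges two others), so a suitable choice of distinct knots makes the stabilizer of the labeling exactly $\langle\rho\rangle$, giving $\mathrm{TSG}_+(\Gamma)=\mathbb{Z}_n$.

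The main obstacle is $\mathbb{Z}_5$, and I would flag it explicitly. Here the potential enlargement is to $D_5$, which is positively realizable, so the negative results do not exclude it; moreover the dihedral involution $\tau=(25)(34)$ fixes each of the three orbits of $\langle\rho\rangle$ on the fifteen edges setwise (using $\tau\rho\tau^{-1}=\rho^{-1}$ together with a short computation), so no labeling by knot types can distinguish it. Ruling out $\tau$ therefore requires a genuine chirality argument: I would build $\Gamma$ invariant under the orientation-preserving $\rho$ and under an orientation-reversing reflection $r$ inducing $\tau$, and observe that if some orientation-preserving homeomorphism $g$ also induced $\tau$, then $gr^{-1}$ would be an orientation-reversing homeomorphism inducing the identity, hence carrying each constituent cycle to itself while reversing ambient orientation. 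I would obstruct this by arranging a chirality invariant that an orientation-reversing self-homeomorphism fixing the graph setwise cannot preserve --- for example, making some constituent cycle $Z$ with $\tau(Z)\neq Z$ embed as a chiral knot (with $\tau(Z)$ its mirror), or forcing a nonzero linking number between two $\tau$-invariant cycles on which $\tau$ acts with opposite effect on orientation. The delicate point, which is where an explicit picture is needed, is that the cycles of $P$ share edges and so cannot be knotted or linked independently; verifying that a single embedding can be simultaneously $\rho$-symmetric, $r$-symmetric, and chiral in the required way is the heart of the construction.
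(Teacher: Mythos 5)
Your reduction to the five candidate groups is identical to the paper's, and your treatment of $D_5$, $D_3$, $\mathbb{Z}_3$, and $\mathbb{Z}_2$ is correct but takes a genuinely different route. For the dihedral groups you obtain the upper bound from the subgroup lattice of $S_5$ plus the negative results (every proper overgroup of your $D_5$ or $D_3$ contains $D_2$ or an element of order $4$, hence is not positively realizable, while $\mathrm{TSG}_+(\Gamma)$ is positively realizable by definition); the paper instead gets the bound knot-theoretically, via a unique $5_1$ cycle in its $D_5$-symmetric embedding (Figure~\ref{D5Realize}) and a unique knotted $9$-cycle in its $D_3$-symmetric embedding (Figure~\ref{D3em}). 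Your lattice argument is cleaner where it applies, since any equivariant embedding then works, but it only applies to groups that are maximal among the positively realizable ones. For $\mathbb{Z}_3$ and $\mathbb{Z}_2$ your device --- pairwise non-mirror chiral local knots, constant on $\langle\rho\rangle$-orbits of edges, together with the check that the subgroup of $S_5$ fixing each orbit setwise is exactly $\langle\rho\rangle$ --- is sound (your orbit computations for $(123)$ and for a double transposition do check out), whereas the paper stays inside its $D_5$- and $D_3$-bounded embeddings and kills one generator at a time with $8_{17}$ or $4_1$ knots.

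The genuine gap is $\mathbb{Z}_5$, exactly where you flagged it, and it stems from an error in your diagnosis: it is \emph{not} true that ``no labeling by knot types can distinguish'' the dihedral involution $\tau=(25)(34)$, once one uses \emph{oriented} knot types. This is precisely how the paper finishes the case: starting from its $D_5$-symmetric embedding (so that $\mathrm{TSG}_+\leq D_5$ via the unique $5_1$ knot), it ties \emph{identically oriented} $8_{17}$ knots into the five edges of the cycle $12345$. The order-$5$ rotation respects these oriented local knots, so $\mathbb{Z}_5\leq\mathrm{TSG}_+$; but any homeomorphism inducing $\tau$ maps the cycle $12345$ to itself reversing its cyclic orientation, hence carries an oriented local $8_{17}$ to its reverse, which is impossible because $8_{17}$ is non-invertible. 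In your language: $\tau$ does preserve each $\langle\rho\rangle$-orbit of edges setwise, but it reverses the cyclic order along each orbit, and oriented non-invertible knots detect exactly that. Your proposed alternative --- an embedding that is simultaneously $\rho$-symmetric, invariant under an orientation-reversing $r$ inducing $\tau$, and chiral in a way that obstructs orientation-reversing homeomorphisms inducing the identity --- is a coherent plan, but you explicitly leave its existence unverified, and that verification is the entire content of the case. As written, your argument establishes positive realizability of only four of the five groups.
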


\begin{proof}  Corollaries~\ref{cor} and \ref{46cor} show that the groups $S_5$, $S_4$, $A_5$, $A_4$, $\mathbb{Z}_5 \rtimes \mathbb{Z}_4$, $D_6$, $D_4$, $D_2$, $\mathbb{Z}_6$,  and $\mathbb{Z}_4$ are not positively realizable for the Petersen graph.  Thus the only groups that could be positively realizable are $D_5$, $D_3$, $\mathbb{Z}_5$, $\mathbb{Z}_3$, and $\mathbb{Z}_2$. Below we provide embeddings of the Petersen graph which positively realize each of these groups.

First let $\Gamma$ be the embedding shown in Figure~\ref{D5Realize}, with vertex labels as indicated. The $5$-cycle $abcde$ must be setwise invariant under every homeomorphism of $(S^3,\Gamma)$ because it is the only $5_1$ knot in the embedding.  Thus $\mathrm{TSG}_+(\Gamma)\leq D_5$.  The homeomorphism of $S^3$ which rotates the figure by $-\frac{2\pi}{5}$ induces the automorphism $\varphi=(12345)(acebd)$ on $\Gamma$; and the homeomorphism of $S^3$ which turns the figure over induces the automorphism $\psi=(25)(34)(eb)(cd)$ on $\Gamma$.  Since $\langle \varphi, \psi\rangle=D_5$, we have $\mathrm{TSG}_+(\Gamma)= D_5$.

\begin{figure}[h!]
\includegraphics[width=4cm]{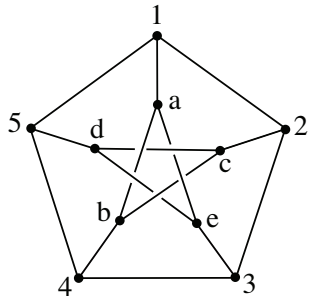}
\caption{An embedding $\Gamma$ of $P$ with $\mathrm{TSG}_+(\Gamma)=D_5$}\label{D5Realize}
\end{figure}

We obtain an embedding $\Gamma_1$ from $\Gamma$ by adding identically oriented $8_{17}$ knots to each of the edges of the cycle $12345$.  Since $abcde$ is still the only $5_1$ knot in $\Gamma_1$, $abcde$ must still be setwise invariant, and hence again $\mathrm{TSG}_+(\Gamma_1)\leq D_5$.  Also, the automorphism $\varphi=(12345)(acebd)$ is still induced by a rotation of $(S^3,\Gamma_1)$ by $-\frac{2\pi}{5}$.  But by the non-invertibility of  the knot $8_{17}$, no homeomorphism can turn over the $5$-cycle $abcde$.  Thus $\mathrm{TSG}_+(\Gamma)= \mathbb{Z}_5$.

Next we let $\Gamma_2$ be the embedding obtained from $\Gamma$ by adding the knot $4_1$ to the edges $15$ and $12$.  Again  the $5$-cycle $abcde$ is setwise invariant, and hence $\mathrm{TSG}_+(\Gamma_2)\leq D_5$.  Since the $4_1$ knot is invertible, turning the figure over still induces the automorphism $\psi=(25)(34)(eb)(cd)$.  However, now no rotation of $abcde$ is possible.  Thus we have $\mathrm{TSG}_+(\Gamma_2)= \mathbb{Z}_2$.

We now start with the embedding $\Lambda$ of the Petersen graph shown in Figure \ref{D3em}. Observe that the $9$-cycle $154baedc2$ contains a trefoil knot, which is the only knot in the embedding.  Also vertex $3$ is the only vertex which is not on or adjacent to this knotted $9$-cycle.  So any homeomorphism of $(S^3,\Lambda)$ must setwise fix the set of edges $\{23, e3, 43\}$.  Thus $\mathrm{TSG}_+(\Lambda)\leq D_3$. Now the homeomorphism of $S^3$ which rotates the figure by $-\frac{2\pi}{3}$ about an axis perpendicular to the page going through vertex $3$ induces the automorphism $\varphi=(24e)(c5a)(bd1)$ on $\Lambda$.  Also, the homeomorphism of $S^3$ which turns the figure over induces the automorphism $\psi=(1c)(ab)(e4)(d5)$.  Since $\langle\varphi,\psi\rangle=D_3$, we have $\mathrm{TSG}_+(\Lambda)= D_3$.

\begin{figure}[ht]
\includegraphics[width=5cm]{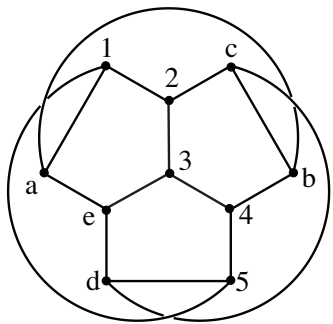}
\caption{An embedding $\Lambda$ of $P$ with $\mathrm{TSG}_+(\Lambda)=D_3$.}\label{D3em}
\end{figure}

Finally, we obtain an embedding $\Lambda_1$ from $\Lambda$ by adding identically oriented $8_{17}$ knots to the edges $a1$, $cb$, and $5d$.  The set of edges $\{23, e3, 43\}$ must again be invariant under any homeomorphism of $(S^3,\Lambda_1)$, and hence $\mathrm{TSG}_+(\Lambda_1)\leq D_3$.  Also, the automorphism $(24e)(c5a)(bd1)$ is still induced by a rotation of $\Lambda_1$ by $-\frac{2\pi}{3}$.  However, because of the non-invertibility of $8_{17}$, no homeomorphism can turn over the knotted $9$-cycle $154baedc2$.  Hence $\mathrm{TSG}_+(\Lambda_1)= \mathbb{Z}_3$.  \end{proof}

Finally, we prove Theorem~\ref{Realize}, which we restate here for the convenience of the reader.

\begin{theorem}\label{Realize}
The non-trivial groups which are realizable for the Petersen graph are $\mathbb{Z}_5 \rtimes \mathbb{Z}_4$, $D_5$, $D_3$, $\mathbb{Z}_5$, $\mathbb{Z}_4$, $\mathbb{Z}_3$, and $\mathbb{Z}_2$.
\end{theorem}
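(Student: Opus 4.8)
The plan is to establish Theorem~\ref{Realize} by combining the negative results of Section~2 with explicit constructions, exactly as was done for the positive realizability theorem. First I would invoke Corollaries~\ref{cor} and~\ref{46cor} together with Theorem~\ref{noD_2}: these show that $S_5$, $S_4$, $A_5$, $A_4$, $D_6$, $D_4$, $D_2$, and $\mathbb{Z}_6$ are not realizable for the Petersen graph. This leaves as candidate realizable groups precisely $\mathbb{Z}_5 \rtimes \mathbb{Z}_4$, $D_5$, $D_3$, $\mathbb{Z}_5$, $\mathbb{Z}_4$, $\mathbb{Z}_3$, and $\mathbb{Z}_2$. Since every positively realizable group is realizable (by the trefoil-insertion observation in the introduction), Theorem~\ref{positive} immediately gives realizability of $D_5$, $D_3$, $\mathbb{Z}_5$, $\mathbb{Z}_3$, and $\mathbb{Z}_2$. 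Thus the only new work is to exhibit embeddings realizing $\mathbb{Z}_5 \rtimes \mathbb{Z}_4$ and $\mathbb{Z}_4$.

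For $\mathbb{Z}_5 \rtimes \mathbb{Z}_4$, I would look for an embedding whose full symmetry group (allowing orientation-reversing homeomorphisms) is this order-$20$ group. The natural source is an embedding with a distinguished $5$-cycle that is setwise invariant, giving $\mathrm{TSG}(\Gamma) \leq \mathrm{Aut}$ restricted to symmetries of that cycle. A promising starting point is an embedding similar to $\Gamma$ from Figure~\ref{D5Realize} but arranged so that the order-$4$ automorphism $(1234)$-type element is realized by an \emph{orientation-reversing} finite-order homeomorphism (this is consistent with Theorem~\ref{T0}, which forbids order-$4$ elements only for orientation-preserving maps). The key structural fact is that $\mathbb{Z}_5 \rtimes \mathbb{Z}_4$ is exactly the group of symmetries of a regular pentagon's cyclic structure extended by the order-$4$ automorphism of $\mathbb{Z}_5$, so I would seek a symmetric planar-type diagram of $P$ with a $5$-fold rotational axis and an order-$4$ improper symmetry (a rotary reflection) acting compatibly on the vertex labels. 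One then verifies that the induced automorphisms generate $\mathbb{Z}_5 \rtimes \mathbb{Z}_4$ inside $S_5$, and that some knotting or linking invariant pins down the invariant $5$-cycle so that $\mathrm{TSG}(\Gamma) \leq \mathbb{Z}_5 \rtimes \mathbb{Z}_4$, forcing equality. The embedding realizing $\mathbb{Z}_4$ should then be obtained from the $\mathbb{Z}_5 \rtimes \mathbb{Z}_4$ embedding (or from $\Gamma$) by decorating edges with knots that destroy the $5$-fold symmetry while preserving the order-$4$ orientation-reversing symmetry, analogously to how $\mathbb{Z}_5$ and $\mathbb{Z}_3$ were extracted from $D_5$ and $D_3$ in the proof of Theorem~\ref{positive}.

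The main obstacle I anticipate is the construction for $\mathbb{Z}_5 \rtimes \mathbb{Z}_4$, specifically realizing the order-$4$ generator by an orientation-reversing homeomorphism of $S^3$ that is simultaneously compatible with the $5$-fold rotation in a way that makes the two induced automorphisms generate the full semidirect product rather than a smaller subgroup such as $D_5$ or $\mathbb{Z}_{20}$ (note $\mathbb{Z}_{20}$ is not a subgroup of $S_5$, so I must confirm the order-$4$ element conjugates the $5$-cycle nontrivially). Concretely, if the $5$-cycle is carried by the order-$5$ rotation via $(12345)$, I need the order-$4$ symmetry to induce a permutation like $(2354)$ fixing $1$, so that conjugation sends $(12345) \mapsto (12345)^2$, yielding the nonabelian relation defining $\mathbb{Z}_5 \rtimes \mathbb{Z}_4$. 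Verifying that no \emph{additional} homeomorphisms exist (so that the group is exactly order $20$ and not larger) is the delicate part: this requires identifying a knot-type or linking-number obstruction, as in Figure~\ref{D5Realize}, that forces every homeomorphism to respect the distinguished $5$-cycle and hence lie in the symmetry group of that cycle's labeling. Once the invariant cycle is established, bounding $\mathrm{TSG}(\Gamma)$ above by $\mathbb{Z}_5 \rtimes \mathbb{Z}_4$ follows from the fact that the symmetries of a single $5$-cycle sitting inside $\mathrm{Aut}(P) \cong S_5$, together with the Smith-theoretic constraints, cannot exceed this group.
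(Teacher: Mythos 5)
Your reduction is exactly the paper's: the negative results of Section~2 together with Theorem~\ref{positive} leave only $\mathbb{Z}_5 \rtimes \mathbb{Z}_4$ and $\mathbb{Z}_4$ to realize, and your plan for $\mathbb{Z}_4$ (decorate the order-$20$ embedding with knots that kill the order-$5$ symmetry but survive the order-$4$ one) matches the paper's construction in spirit. The gap is in the one construction that actually matters, and it is not just that details are missing: the geometry you propose cannot work. You ask for a symmetric diagram in $\mathbb{R}^3$ with a $5$-fold rotation axis and an order-$4$ rotary reflection inducing the relation $\psi\varphi\psi^{-1}=\varphi^2$. If $g$ and $h$ are finite-order isometries of $\mathbb{R}^3$ preserving the embedded graph, then $hgh^{-1}g^{-2}$ induces the trivial automorphism, so it fixes all ten vertices, forcing (up to a degenerate coplanar case that one rules out by comparing determinants) $hgh^{-1}=g^2$; but $hgh^{-1}$ is a rotation by $2\pi/5$ and $g^2$ is a rotation by $4\pi/5$, and these have different traces, so they are not conjugate in $O(3)$. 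More structurally, $\mathbb{Z}_5\rtimes\mathbb{Z}_4$ is not abstractly isomorphic to any finite point group (the finite subgroups of $O(3)$ are cyclic, dihedral, $A_4$, $S_4$, $A_5$, and their products with $\mathbb{Z}_2$, none of which is nonabelian of order $20$ with elements of order $4$), so no arrangement of rigid symmetries of a figure in $\mathbb{R}^3$ can ever induce this group. The idea you are missing is the paper's glide rotation: embed the two disjoint $5$-cycles $X$ and $Y$ as a horizontal and a vertical circle (a Hopf-link configuration, Figure~\ref{Z5Z4}), and induce the order-$5$ automorphism by the double rotation of $S^3$ that turns $X$ by $2\pi/5$ but turns $Y$ by $4\pi/5$. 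Precisely because the two cycles are rotated by \emph{different} angles, the orientation-reversing order-$4$ homeomorphism that \emph{swaps} $X$ and $Y$ conjugates $\varphi$ to $\varphi^2$, which is the semidirect-product relation. This construction is intrinsic to $S^3$ and has no analogue among rigid motions of $\mathbb{R}^3$.

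Your proposed upper bound also contains an error. You suggest pinning down a single distinguished invariant $5$-cycle by a knot or linking invariant and then bounding $\mathrm{TSG}(\Gamma)$ by "the symmetries of that cycle." But the setwise stabilizer in $\mathrm{Aut}(P)\cong S_5$ of a single $5$-cycle is only $D_5$ of order $10$ (by Lemma~\ref{abstractP} the stabilizer acts faithfully on the cycle), whereas the order-$4$ elements of $\mathbb{Z}_5\rtimes\mathbb{Z}_4$ necessarily interchange the two $5$-cycles of the invariant pair; only the stabilizer of the unordered pair has order $20$. So if your invariant really forced each homeomorphism to preserve one fixed $5$-cycle, you would have proved $\mathrm{TSG}(\Gamma)\leq D_5$, contradicting the lower bound you are trying to establish. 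The paper sidesteps this entirely: once $\mathbb{Z}_5\rtimes\mathbb{Z}_4\leq\mathrm{TSG}(\Delta)$ is shown, it observes that the only subgroup of $S_5$ properly containing $\mathbb{Z}_5\rtimes\mathbb{Z}_4$ is $S_5$ itself, which is already excluded by Corollary~\ref{cor}, so equality is automatic and no auxiliary knotting invariant is needed for the upper bound.
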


\begin{proof} Since any positively realizable group is also realizable, the positively realizable groups $D_5$, $D_3$, $\mathbb{Z}_5$, $\mathbb{Z}_3$, and $\mathbb{Z}_2$ are realizable for the Petersen graph.  Furthermore, by Corollaries~\ref{46cor} and \ref{cor}, the groups $S_5$, $S_4$, $A_5$, $A_4$, $D_6$, $D_4$, $D_2$, and $\mathbb{Z}_6$ are not realizable. Hence we are left to determine the realizability of the groups $\mathbb{Z}_5 \rtimes \mathbb{Z}_4$ and  $\mathbb{Z}_4$.

Let $\Delta$ denote the embedding of the Petersen graph in $S^3$ illustrated on the left side of Figure~\ref{Z5Z4}, where $X$ denotes the horizontal circle whose vertices are labeled with numbers and $Y$ denotes the vertical circle whose vertices are labeled with letters.  Let $g$ be the glide rotation which rotates $X$ by $\frac{2\pi}{5}$ and rotates $Y$ by $\frac{4\pi}{5}$, in each case in the direction indicated by the arrow.  Then $g$ induces the automorphism $\varphi=(12345)(acebd)$ on $\Delta$, as shown on the right side of the figure.

\begin{figure}[h!]
\includegraphics[width=12cm]{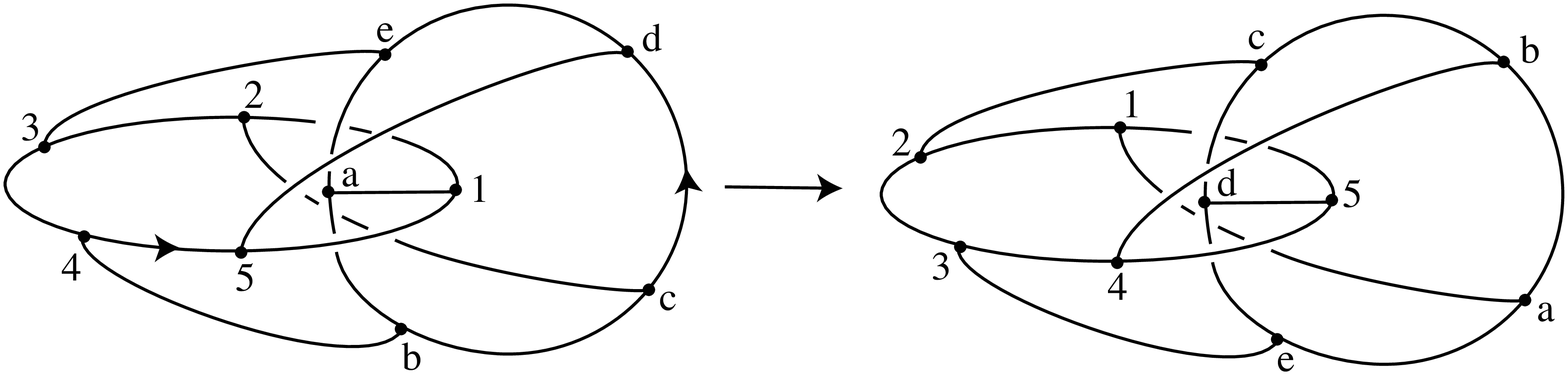}
\caption{A glide rotation induces $\varphi=(12345)(acebd)$.}
\label{Z5Z4}
\end{figure}

Now let $h$ denote the orientation reversing homeomorphism which first rotates $\Delta$ by $\frac{\pi}{2}$ about a horizontal axis so that $X$ becomes vertical and $Y$ becomes horizontal, and then reflects $\Delta$ through a vertical mirror, as illustrated in Figure~\ref{Z5Z4Reflect}.  Then $h$ induces $\psi=(1a)(2b5e)(3c4d)$ on $\Delta$.

\begin{figure}[h!]
\includegraphics[width=12cm]{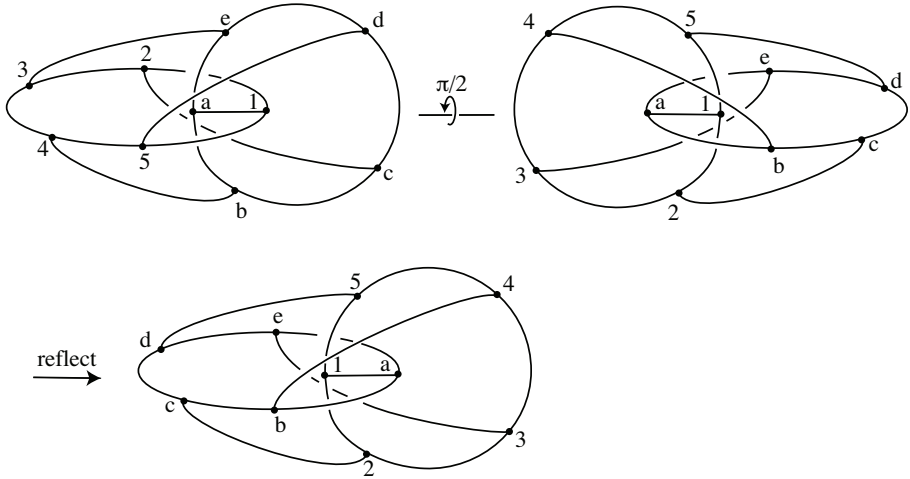}
\caption{A rotation followed by a reflection induces $\psi=(1a)(2b5e)(3c4d)$.}
\label{Z5Z4Reflect}
\end{figure}

Observe that $\varphi^5$ and $\psi^4$ are each the identity, and $\varphi\psi=(1c5b)(2d4a)(3e)=\psi\varphi^2$.  Thus $\langle\psi,\varphi\rangle=\mathbb{Z}_5 \rtimes \mathbb{Z}_4\leq \mathrm{TSG}(\Delta)$.  Furthermore, since no group containing $\mathbb{Z}_5 \rtimes \mathbb{Z}_4$ as a proper subgroup is among those that could be $\mathrm{TSG}(\Delta)$, we must have $\mathrm{TSG}(\Delta)=\mathbb{Z}_5 \rtimes \mathbb{Z}_4$ as required.

Finally, we obtain an embedding $\Delta_1$ from the embedding $\Delta$ illustrated on the left in Figure~\ref{Z5Z4} by adding the knot $4_1$ to the edge $1a$, the knot $+3_1$ to each of the edges of the $5$-cycle $abcde$, and the knot $-3_1$ to each of the edges of the $5$-cycle $12345$.  Since $4_1$ is invertible and achiral, $\psi=(1a)(2b5e)(3c4d)$ is still induced on $\Delta_1$ by a rotation followed by a reflection which interchanges the $+3_1$ knots on the edges of $abcde$ with the $-3_1$ knots on the edges of $12345$.  Thus $\mathbb{Z}_4\leq \mathrm{TSG}(\Delta_1)$, and from our list of realizable groups we know that either $\mathrm{TSG}(\Delta_1)=\mathbb{Z}_4$ or $\mathrm{TSG}(\Delta)=\mathbb{Z}_5 \rtimes \mathbb{Z}_4$.  

Suppose that some homeomorphism $g$ of $S^3$ induces an order $5$ automorphism on $\Delta_1$.  Because $1a$ is the only edge of $\Delta_1$ containing a $4_1$ knot, $g$ must setwise fix $1a$.  Now $g^2$ is orientation preserving and still induces an order $5$ automorphism on $\Delta_1$.  Since $12345$ contains five $+3_1$ knots and $abcde$ contains five $-3_1$ knots, this implies that $g^2$ takes the $5$-cycles $12345$ and $abcde$ to themselves fixing vertices $1$ and $a$.   But this is impossible since $g^2$ has order $5$.  Hence $\Delta_1$ cannot be invariant under a homeomorphism inducing an order $5$ automorphism on $\Delta_1$.  It now follows that $\mathrm{TSG}(\Delta_1)=\mathbb{Z}_4$, as required.
 \end{proof}

\bigskip


\end{document}